\theoremstyle{remark}
\newtheorem{proposition}{Proposition}
\journal{SIAM Journal of Control and Optimization}
\begin{document}
\begin{frontmatter}

\title{Solving Bang-Bang Problems Using The Immersed Interface Method and Integer Programming}

%% Group authors per affiliation:
\author[ncsu,anl,llnl]{Ryan H. Vogt\corref{cor1}}
\ead[email]{rvogt2@ncsu.edu}

\author[ncsu]{Sarah Strikwerda}
\ead[email]{slstrikw@ncsu.edu}

\cortext[cor1]{Corresponding author}
\address[ncsu]{Department of Mathematics, North Carolina State University, Raleigh, NC 27695, USA.}  
\address[anl]{Mathematics and Computer Science Division, Argonne National Laboratory, Lemont, IL 60439, USA.}  
\address[llnl]{Center for Applied Scientific Computing, Lawrence Livermore National Laboratory, Livermore, CA 94550, USA.}

\begin{comment}
% Title. If the supplement option is on, then "Supplementary Material"
% is automatically inserted before the title.
\title{Solving Bang-Bang Problems Using The Immersed Interface Method and Integer Programming\thanks{Submitted to the editors \today.}}

%\title{On The Stability of Finite-Difference Methods for Nonlinear Equations: The Fictitious Source for Stability Method \thanks{Submitted to the editors \today.}}

% Authors: full names plus addresses.
%\author{Ryan H. Vogt\thanks{North Carolina State University, Department of Mathematics, rvogt2@ncsu.edu} \and \thanks{ Argonne National Laboratory, Mathematics and Computer Science Division} 
%}
%\author{A.~U. Thorone\footnotemark[2]\ \footnotemark[5]
%\and A.~U. Thortwo\footnotemark[3]\ \footnotemark[5]
%\and A.~U. Thorthree\footnotemark[4]}
%\renewcommand{\thefootnote}{\fnsymbol{footnote}}
%\footnotetext[2]{Address of A.~U. Thorone}
%\footnotetext[3]{Address of A.~U. Thortwo}
%\footnotetext[4]{Address of A.~U. Thorthree}

%\footnotetext[5]{Support in common for the first and second
%authors.}
%\renewcommand{\thefootnote}{\arabic{footnote}}

\author{Ryan H. Vogt\footnotemark[1] \ \footnotemark[3] \ \footnotemark[4]\ ,
Sarah Strikwerda\footnotemark[2]}
\renewcommand{\thefootnote}{\fnsymbol{footnote}}
\footnotetext[1]{North Carolina State University, Department of Mathematics, rvogt2@ncsu.edu}
\footnotetext[2]{North Carolina State University, Department of Mathematics, slstrikw@ncsu.edu}
\footnotetext[3]{Argonne National Laboratory, Mathematics and Computer Science Division}
\footnotetext[4]{Lawrence Livermore National Laboratory, Center for Applied Scientific Computing}
\end{comment}

\renewcommand{\thefootnote}{\arabic{footnote}}

% REQUIRED
\begin{abstract}
 In this paper we study numerically solving optimal control problems with bang-bang control functions. We present a formal Lagrangian approach for solving the optimal control problem, and address difficulties encountered when numerically solving the state and adjoint equations by using the immersed interface method. We note that our numerical approach does not approximate the discontinuous control function with smooth functions, instead we solve the true bang-bang optimal control problem. Our approach for solving the optimal control problem uses an adjoint-based gradient. We use the gradient in our first-order trust-region method to generate a local minimizing control. We present detailed numerical results to demonstrate the effectiveness of our method.
\end{abstract}

% REQUIRED
\begin{keyword}
  ODE-constrained optimization, PDE-constrained optimization, finite-difference methods, immersed-interface method, bang-bang control, optimal control\\
  \emph{AMS}: 65K10 \sep 49M05 \sep 65M08 \sep	65M12 \sep 90C10 
\end{keyword}

\end{frontmatter}

\section{Introduction} 
Optimal control problems are frequently studied due to their importance in a wide range of fields including economics, biological systems, and engineering\cite{chen2005optimal,1273042}. In these problems, we seek to find a control that minimizes a cost functional subject to a differential equation. These problems can quickly become very complex. Therefore, in many applications a solution that only takes on a few discrete values is sought after. Such a control is called a bang-bang control. The most common bang-bang control is one which takes on the value of $0$ or $1$. This is called a bang-bang (on-off) control, and it abruptly changes between extreme values. The change between these values is referred to as a "bang".
Bang-bang controls have been used to study how to control the population of invasive species \cite{srinivasu2011role}, regulate chemotherapy \cite{wang2016optimal}, and make decisions about stock options \cite{chen2005optimal}.

In the current state of the art approaches to solving bang-bang optimal control problems \cite{bestehorn2019switching,antal2017proactive,de2019mixed}, which are a special case of mixed-integer optimal control problems (MIOCPs), several assumptions are made. First, the solution to the different equation constraint is continuous. Second, that when numerically solving the differential equation constraint, the switching points must align with the computational mesh. In this work, we provide a framework to solve bang-bang optimal control problems without assuming regularity in the state, or the computational mesh having to align with the switching points. In this paper we  develop finite-difference schemes associated with the bang-bang control problem that are consistent, stable and convergent; using the immersed interface method (IIM). Next we introduce an adjoint-based trust-region method to solve the reduced-space formulation of the bang-bang control problem.

We take a formal Lagrangian approach, assuming a reduced-space model. In other words, we assume that given a control $v$, there exists a unique state T dependent on v. This is to say that the state-to-control operator $T(v)$ is injective. We note that our method can also readily be applied to full-space formulations as well. In our case of a  reduced-space formulation, we derive a gradient for $v$ by solving the strong form of the state and adjoint equation. We address the lack of regularity of the state and adjoint equation, with IIM. We then use this adjoint-based gradient in a trust-region method for binary optimization to find a local minimizing bang-bang control.%EXPAND ON WHAT THESE METHODS ARE

To demonstrate our approach for solving bang-bang problems, we focus on numerically solving the following temperature control problem:

\begin{equation}\label{eq:genproblem}
\begin{aligned}
& \underset{T,\; v}{\text{minimize}} & & \frac{1}{2} \int_{\Omega} (T - \hat{T})^2dt\\
& \text{subject to} 
  & &  \frac{dT}{dt} = -K(T-T_s) +Cw +f \quad \text{ on} \; \Omega,\\
   & & &   \Omega = \bigcup_{i=1}^{N }\Omega_i, \\
   & & &   w(t) = \sum_{i=1}^{N} v_i \mathbb{1}_{\Omega_i \setminus \partial \Omega_i  }(t)  \\,
    & & &  v_i \in \{0,1\} \quad \forall i=1...N,   \\
    & & & \Omega_i =[\tau_i,\tau_{i+1}]  \quad \forall \; i=1,...,N, \\
    & & & \tau_1 = 0, \quad \tau_{N+1} = t_{\text{final}}, \\
   & & & \Omega = [0,t_{\text{final}}],   \\
   & & & T(0) = T_0
 \end{aligned}
 \end{equation}

where we assume that $T(t)$ is the temperature of a body at time $t$, $\hat{T}(t)$ is the target temperature profile, $f \in C(\Omega_i \setminus \partial \Omega_i)$, and $w(t)$ the control function is either "on" or "off" at a time $t$. We represent the control function $w(t)$ as a linear combination of indicator functions $\mathbb{1}$, and assign the control function value to be $v_i$ on $\Omega_i \setminus \partial \Omega_i$. In this paper we assume that the support of the bang-bang choices, defined by $\Omega_i$, are equal length i.e,
\begin{equation*}
    |\Omega_i| = \frac{T}{N},\quad \forall i=1...N.
\end{equation*} %Wouldn't it be clearer to define \tau_i than the length of \Omega_i.
We also assume that $K, \; C, \; t_{\text{final}} >0$. 
We note that this assumption does not have to hold for our method to work; it can very well be the case the the intervals lengths can vary, but we choose them to be constant length in this work. We lastly note that we will refer to the objective in this paper as
\begin{equation}
    \mathcal{J} =  \frac{1}{2} \int_{\Omega} (T - \hat{T})^2dt.
\end{equation}
 
%EXPLAIN WHAT HAS BEEN AND WHY YOUR METHOD IS FAVORABLE
%Edit this paragraph
Even though this paper focuses on the specific model problem \eqref{eq:genproblem}, we explain at each step taken to solve \eqref{eq:genproblem} how our approach can be tailored to other bang-bang problems.

In this paper we develop a first order, stable convergent method to solve the state and adjoint equations. The method takes advantage of IIM \cite{leveque1994immersed}. The IIM was originally developed to solve elliptic partial-differential equations with discontinuous coefficients, also known as interface problems, with a finite-difference approach. The IIM has been successfully applied to generate high order methods for solving equations where discontinuities are present \cite{linnick2005high,lee2003immersed,li2001immersed}. In our work we use the IIM to solve the adjoint and state equation due to the bang-bang control. We establish a modifed Euler's method that can solve the state and adjoint equation, while also obtaining second order accuracy, stability, and convergence of the numerical solution.

In practice, analytic solutions to optimal control problems are difficult to obtain. In absence of analytic techniques, numerical methods are a powerful tool to solve optimal control problems. In our example here, the bang-bang control makes it difficult to apply existing numerical techniques to solve ODE's because $T \not \in C^1(\Omega)$. One at first glance may look at \eqref{strongstate} and believe that a simple method, such as Euler's method, may be readily applied to solve the equation numerically. However, Eulers method assumes that $T \in C^2(\Omega)$ for the method to be applicable. This motivates our approach of solving the state equation with IIM.

 We note in our framework the effort of numerically solving the bang-bang problem lies in solving the differential equations, and building the gradient. The amount of bangs present in the bang-bang problem, from the perspective of applying our trust-region method, has little impact on the overall computational effort because we solve a sequence of binary knapsack problems which can be solved easily.

%IT seems like the above information is already a brief outline, so maybe combine the two sections.

\section{The Lagrangian and the relaxed gradient}\label{s:laggrad}
To calculate the gradient of the objective function with the respect to the control parameters $v$, $\frac{\partial \mathcal{J}}{\partial v}$, we use a formal Lagrangian approach. We first relax the condition of $v\in \{ 0,1\}^N$ to a continuous relaxation: $0 \leq v \leq 1$. We do this so that we have the notion of a continuous gradient. However, because this gradient expression is valid for any $0\leq v \leq 1$, then it will be valid when evaluated at a $v\in \{0,1\}^N$. We introduce the Lagrangian:
\begin{align}
   \mathcal{L}(T,\lambda,v) = \frac{1}{2} \int_{\Omega} (T-\hat{T})^2 dt + \int_{\Omega}\lambda \bigg(\frac{dT}{dt} + K(T-T_s) - Cw -f\bigg)dt,   
\end{align}
where $\lambda$ is the adjoint variable  (Lagrangian multiplier). 

Next we derive the weak state and weak adjoint equations by calculating the following variational (Gateaux) derivatives:
\begin{align*}
    &\mathcal{L}_\lambda[\Tilde{\lambda}] = \frac{d}{d\epsilon} \bigg( \mathcal{L}(T,\lambda+ \epsilon \Tilde{\lambda},v) \bigg) \bigg|_{\epsilon=0}=0 \quad (\text{Weak State Equation})\\
    &\mathcal{L}_T[\Tilde{T}] = \frac{d}{d\epsilon} \bigg( \mathcal{L}(T+ \epsilon \Tilde{T},\lambda,v) \bigg) \bigg|_{\epsilon=0}=0 \quad (\text{Weak Adjoint Equation}).
    \end{align*}
    The explicit form of these equations are:
    \begin{align}
        &\int_\Omega  \bigg(\frac{dT}{dt}  +K(T-T_s) -Cw -f\bigg) \Tilde{\lambda}dt=0 \quad \forall \Tilde{\lambda} \in H^1(\Omega), \\
       &\int_{\Omega} \bigg(-\frac{d\lambda}{dt}  +K\lambda + T-\hat{T}\bigg) \Tilde{T}dt = 0\quad \forall \Tilde{T} \in H^1(\Omega)
     \end{align}
The state and adjoint solutions are not $C^1(\Omega)$, however from standard ODE theory we see the state and adjoint equations are $C^1(\Omega_i\setminus \partial \Omega_i)$. Therefore, we write the strong form of these equations restricted to the set  $\Omega_i\setminus \partial \Omega_i \; \forall i$:
 \begin{equation} \begin{cases}
     &\frac{dT}{dt} = -K(T-T_s) +Cw +f \quad (\text{Restricted Strong State Equation}) \label{strongstate}
     \\ 
     &T(0)=T_0
\end{cases}
\end{equation}
\begin{equation}
\begin{cases}
     &-\frac{d\lambda}{dt} = -K\lambda - (T-\hat{T}) + g \quad (\text{Restricted Strong Adjoint Equation})\label{strongadjoint}\\ 
     &\lambda(T) = 0.
 \end{cases}
 \end{equation}
 In the remainder of the paper we refer the restricted state and adjoint equations as just the state and adjoint equations. We note that in the adjoint equation $g=0$. However, we include $g \in C(\Omega_i \setminus \partial \Omega_i)$ to test our adjoint solver in Section \ref{numerics} using the method of Manufactured Solution and the IIM.
 
 The $i$-th component of the gradient with respect to $v_i$ is then:
 \begin{equation}
      \frac{\partial \mathcal{J}}{ \partial v_i} =  \frac{\partial \mathcal{L}}{ \partial v_i} = -\int_{\Omega_i} C \lambda dt.
 \end{equation}
 So to obtain a gradient associated with a fixed $v$, we first solve the strong state equation, then solve the strong adjoint equation. Once we have the adjoint variable, we then integrate a scaled version it over $\Omega_i$ to obtain the $i$-th component of the gradient.

\section{The discretization of the restricted state Equation}\label{s:statediscreization}

In our optimal control problem, and other bang-bang problems, the IIM is readily applicable to solve the state equation. This is because the presence of the bang-bang control can cause, as seen in \eqref{strongstate}, the state solution to not be $C^1(\Omega)$. However, we can readily apply the IIM to this problem despite the lack of regularity of the state, or its derivatives, and have theoretical guarantees that the numerical solution is correct.

For many numerical methods, such as Euler or Runge-Kutta, there is an assumption of high regularity in state. Indeed in these methods, the error terms will contain a high order derivative term. If the solution to an ODE doesn't have the sufficient regularity to apply these numerical schemes, then one cannot guarantee the consistency of the method. If the consistency of the numerical method cannot be established then there is no hope to establish the convergence of the numerical solution to the true solution. This means one cannot be confident that the numerical method approximates the state solution correctly. 

In order to apply IIM to the state equation \eqref{strongstate}, we begin by deriving the jump conditions associated with \eqref{strongstate}. The jump conditions describe how the solution and it's derivatives jump when a bang occurs. The jump conditions are used to create our numerical scheme. In our problem, we introduce $N$ binary variables to model the bang-bang control, so there would be at most $N-1$ bangs (switches). Each one of these bangs will occur at
\begin{align}
    t = \alpha_i = \Omega_i \cap \Omega_{i+1} \quad i=1,...,N-1.
\end{align}
 For our state scheme, we focus on developing a first-order method, so we need only to derive the the jump in the state and its first derivative over these bang points. We note if one wishes to apply the IIM to yield a method of order $m$, then one must derive the jump conditions up to the $m$-th derivative.

At each bang time $t=\alpha_i$, we define the jump in the state and its first derivative to be:
\begin{align}\label{jumpconditionsfull}
&[T]_{t=\alpha_i} = T^+(\alpha_i) - T^-(\alpha_i) \\
&\bigg[\frac{dT}{dt}\bigg]_{t=\alpha_i} = \frac{dT}{dt}^+(\alpha_i) - \frac{dT}{dt}^-(\alpha_i),
\end{align}
where we we denote "$+$" to represent the solution and its derivative to the right of the bang point and "$-$" to the left of the bang point. In general we write the $k$-th order derivative as:
\begin{align*}
    \bigg[\frac{d^kT}{dt^k}\bigg]_{t=\alpha_i} = \frac{d^kT}{dt^k}^+(\alpha_i) - \frac{d^kT}{dt^k}^-(\alpha_i),
\end{align*}
where we assume that the zero order derivative is just the state itself.

In many physical examples, even though the derivative is discontinuous (therefore a non-zero jump) the state is continuous so
\begin{align*}
    [T]_{t=\alpha_i} =0.
\end{align*}
This would be known as a direct IIM because we know the jump in the state explicitly. However, in this work we make no such assumption since for general problems there is no manner to know that the state is continuous across the bang points. Therefore in this work we assume it to be unknown i.e
\begin{align}\label{zeroaug}
    [T]_{t=\alpha_i} = q_i,
\end{align}
where $q_i$ is to be determined. When the jump conditions are not known explicitly, the method is called the Augmented IIM \cite{li2001immersed}, and $q_i$ for $i=1,..,N-1$ are called the augmented variables. From \eqref{strongstate} we see that the jump condition for the derivative of the state is
\begin{align}
    \bigg[\frac{dT}{dt}\bigg]_{t=\alpha_i} = -K[T]_{t=\alpha_i} +C[w]_{t=\alpha_i} +[f]_{t=\alpha_i},
\end{align}
because we assume $K,$ and $T_s$ to be constant in this work. We then express the jump of the first-derivative in terms of the augmented variables \eqref{zeroaug}:
\begin{align*}\label{firstaug}
     \bigg[\frac{dT}{dt}\bigg]_{t=\alpha_i} = -Kq_i +C[w]_{t=\alpha_i} +[f]_{t=\alpha_i}.
\end{align*}

Next we focus on deriving the IIM scheme for our problem. We discretize the interval $[0,t_{\text{final}}]$ into $N_t$ steps with $t_n =n\Delta t$, $n=0,...N_t$ and $\Delta t = \frac{t_{\text{final}}}{N_t}$. The idea of the IIM scheme is to use a standard scheme away from the bang points and derive a "corrected" scheme around the bang points to obtain a consistent scheme. In this work we base our scheme around Euler's method. We say that if $t_n \leq \alpha_i \leq t_{n+1}$ for some $i$ then $t_n$ is an irregular grid point. If this is not the case, then the grid point is said to be regular. At regular grid points we apply Euler's method:
\begin{align}
    T^{n+1} = T^n  + \Delta t(-K(T^n-T_s) +Cw^n +f^n) + \mathcal{O}(\Delta t).
\end{align}
In the case $t_n$ is an irregular grid point we use the method of unknown coefficients:
\begin{align}\label{statemethod}
    c_1^nT^{n+1} + c_2^n T^{n} - \bar{C}^n(c_1,c_2) = -K(T^n-T_s) +Cw^n +f^n + \mathcal{O}(\Delta t),
\end{align}
where the goal is to find $c_1^n$, $c_2^n$ and the correction term, $\bar{C}^n(c_1^n,c_2^n)$, that maintains the same order of error as the standard scheme at the irregular grid points, which in this case is first-order accuracy. Since a grid point in out interval is either regular or irregular, we will obtain the order of accuracy of the scheme globally even though the state nor its derivatives need to be continuous on the interval.

We use the Taylor series to determine $c_1,c_2$ and the correction term with first order accuracy. We expand the derivative in terms of the "$-$" side about $\alpha_i$:
\begin{align} \label{stateapprox}
    &\frac{dT}{dt}^-(\alpha_i) = c_1T^{n+1}+c_2T^n = \\ \nonumber
    & c_1^n\bigg(T^+(\alpha_i) + \frac{dT}{dt}^+(\alpha_i)(t_{n+1}-\alpha_i) + \mathcal{O}\bigg((t_{n+1}-\alpha_i)^2 \bigg)  \bigg) + \\ \nonumber
    & c_2^n\bigg(T^-(\alpha_i) + \frac{dT}{dt}^-(\alpha_i)(t_{n}-\alpha_i) + \mathcal{O}\bigg((t_{n}-\alpha_i)^2\bigg)  \bigg).
\end{align}
Note that we expand $t_{n+1}$ in terms of "$+$" and $t_n$ in terms of "$-$" because they are to the right and left of the bang point $\alpha_i$, respectively. 

Next we use the jump conditions \eqref{jumpconditionsfull} to write $T^+$ and $\frac{dT}{dt}^+$ in terms of $T^-$ and $\frac{dT}{dt}^-$ so that we can equate the two sides to find $c_1,c_2$ and the correction term. When equating the two sides, we find that $c_1,c_2$ satisfy the following system of equation:
\begin{align*}
    &c_1^n + c_2^n = 0\\
    &c_1^n(t_{n+1}-\alpha_i) + c_2^n(t_n-\alpha_i) =1,
\end{align*}
and the analytic solution to this system is
\begin{align}\label{stateconstants}
    c_1^n = \frac{1}{\Delta t}, \quad c_2^n=-\frac{1}{\Delta t}.
\end{align}
The corresponding correction term is then
\begin{align}\label{statcorrection}
    \bar{C}^n = \frac{1}{\Delta t}\bigg( q_i + ( -Kq_i +c[w]_{t=\alpha_i} + [f]_{t=\alpha_i})(t_n-\alpha_i)\bigg).
\end{align}

\section{The discretization of the restricted adjoint equation}\label{s:adjdiscreization}
We build on Section \ref{s:statediscreization} to solve the adjoint equation numerically. Like the state equation, we first derive the jump conditions. Due to the fact that the adjoint equation is solved backwards in time, we introduce the notion of "backward" jump in terms of our previous definitions $[ \cdot]_{t=\alpha_i}^{\leftarrow} = -[\cdot]_{t=\alpha_i}$ i.e
\begin{align}\label{adjjumps}
    &[\lambda]_{t=\alpha_i}^{\leftarrow} = - [\lambda]_{t=\alpha_i} = \lambda^- - \lambda^+=q_i^\lambda ,\\
     &\bigg[\frac{d\lambda}{dt}\bigg]_{t=\alpha_i}^{\leftarrow} = - \bigg[\frac{d\lambda}{dt}\bigg]_{t=\alpha_i} = Kq_i^\lambda - q_i - [g]_{t=\alpha_i}^{\leftarrow},
\end{align}
and we introduce a new set of augmented variables $q_i^{\lambda}$ that are to be determined. We maintain the same time mesh as the state equation, and move backward instead of forward in time. When moving backwards, from $t_n$ to $t_{n-1}$, we say $t_n$ is an irregular grid point for the adjoint discretization if $t_{n-1} \leq \alpha_i \leq t_n$, and we say $t_n$ is a regular grid point otherwise. In the case $t_n$ is a regular grid point, we use a backward finite-difference approximation of the adjoint variable's first derivative:
\begin{align*}
 \frac{\lambda^n - \lambda^{n-1}}{\Delta t}= -K\lambda^n - (T^n-\hat{T}^n) + g^n + O(\Delta t).
\end{align*}
For an irregular grid point, we expand on the $'+'$ side, because we are marching backwards:
\begin{align*}
    \frac{d \lambda}{dt}^+(\alpha_i) = c_1^{n,\lambda}( \lambda^+(\alpha_i) + \frac{d \lambda}{dt}^+(\alpha_i)(t_n -\alpha_i) ) + c_2^{n,\lambda}( \lambda^-(\alpha_i) + \frac{d \lambda}{dt}^-(\alpha_i)(t_{n-1} -\alpha_i)).
\end{align*}
Like the state equation, we use the method of manufactured coefficients, and the jump conditions for the adjoint variables \eqref{adjjumps} to find the system of equations that $c_1^{n,\lambda}, c_2^{n,\lambda}$ satisfy which we find to be
\begin{align*}
    &c_1^{n,\lambda} + c_2^{n,\lambda}=0 \\
    &c_1^{n,\lambda}(t_n-\alpha_i) + c_2^{n,\lambda}(t_{n-1}-\alpha_i)=1. 
\end{align*}
Solving the systems of equations, we find that
\begin{align*}
     c_1^{n,\lambda} = -\frac{1}{\Delta t}, \quad  c_2^{n,\lambda} = \frac{1}{\Delta t},
\end{align*}
with corresponding correction term 
\begin{align*}
    \bar{C}^{n,\lambda} = \frac{1}{\Delta t} \bigg(q_i^{\lambda} + (Kq_i^{\lambda} -q_i -[g]_{t=\alpha_i}^{\leftarrow})(t_{n-1}-\alpha_i)\bigg).
\end{align*}
The scheme at irregular grid points is then
\begin{align*}
    \frac{\lambda^n -\lambda^{n-1}}{\Delta t} + \bar{C}^{n,\lambda}  = -K\lambda^n - (T^n-\hat{T}^n) + g^n + O(\Delta t)
\end{align*}

\section{Stability, consistency, and convergence of the state and adjoint finite-difference schemes}

We must discuss stability, consistence, and convergence of numerical schemes, for regular and irregular grid points independently. Because a grid point is either regular or irregular, by showing stability, consistency, and convergence in each case we show the overall scheme is stable, consistent, and convergent. 

In practice, the scheme used for the IIM at regular grid points is a standard method, so the stability, consistency, and convergence of the method is already established. What is left to do is to establish the stability, consistency, and convergence for the scheme at irregular grid points.

In practice, we note that consistency of a method is more readily obtainable than stability. We begin by showing that the numerical scheme is consistent for the state scheme. \begin{proposition}
The state scheme is consistent with local truncation error of $\mathcal{O}(\Delta t)$ at both regular and irregular grid points, therefore the method has global truncation error $\mathcal{O}(\Delta t)$ .
\end{proposition}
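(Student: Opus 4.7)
The plan is to split the analysis by grid-point type and then combine, since by construction every grid point is either regular or irregular. For a regular grid point $t_n$, both $t_n$ and $t_{n+1}$ lie inside a single subinterval $\Omega_i\setminus\partial\Omega_i$, so the exact solution $T$ is $C^\infty$ on $[t_n,t_{n+1}]$ (it satisfies a smooth linear ODE there). The scheme reduces to forward Euler, and the standard Taylor-expansion argument gives local truncation error
\begin{equation*}
\tau_n = \tfrac{1}{2}\Delta t\,T''(\xi) = \mathcal{O}(\Delta t),
\end{equation*}
for some $\xi\in(t_n,t_{n+1})$. Nothing new is needed here.

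The substantive step is the irregular case, where $t_n\le\alpha_i\le t_{n+1}$. First I plug the exact one-sided values $T^-(\alpha_i)$, $\frac{dT}{dt}^-(\alpha_i)$ (and their $+$ counterparts) into the scheme \eqref{statemethod} with the coefficients \eqref{stateconstants} and the correction \eqref{statcorrection}. Because $T$ is $C^2$ on each side of $\alpha_i$, I can expand $T^{n+1}$ about $\alpha_i$ from the $+$ side and $T^{n}$ about $\alpha_i$ from the $-$ side, retaining terms through $\mathcal{O}((t_{n+1}-\alpha_i)^2)$ and $\mathcal{O}((t_n-\alpha_i)^2)$, which are both $\mathcal{O}(\Delta t^2)$. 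Then I use the jump identities
\begin{align*}
T^+(\alpha_i) &= T^-(\alpha_i) + q_i,\\
\tfrac{dT}{dt}^+(\alpha_i) &= \tfrac{dT}{dt}^-(\alpha_i) + \bigl(-Kq_i + C[w]_{t=\alpha_i} + [f]_{t=\alpha_i}\bigr),
\end{align*}
to eliminate the $+$ quantities in favor of the $-$ quantities. By the choice of $c_1^n,c_2^n$ and $\bar C^n$, all of the $\mathcal{O}(1)$ terms on the left collapse to $\frac{dT}{dt}^-(\alpha_i)$, and the $\mathcal{O}(1)$ part of the right side collapses to $-K(T^-(\alpha_i)-T_s)+Cw^-(\alpha_i)+f^-(\alpha_i)$, which match by the restricted strong state equation \eqref{strongstate}. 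What remains is the residual from the truncated Taylor tails, and since each tail is $\mathcal{O}(\Delta t^2)$ divided by $\Delta t$ (because $c_1^n,c_2^n = \pm 1/\Delta t$), the local truncation error at the irregular point is $\mathcal{O}(\Delta t)$.

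The main obstacle is the bookkeeping in this last step: I have to be careful that each one-sided Taylor expansion is applied on the correct side of $\alpha_i$, that the jump substitutions produce exactly the correction term $\bar C^n$ written in \eqref{statcorrection}, and that the factor $1/\Delta t$ in the coefficients does not degrade the error estimate. Since $|t_{n+1}-\alpha_i|\le\Delta t$ and $|t_n-\alpha_i|\le\Delta t$, the second-order remainders are bounded by $C\Delta t^2$ uniformly, giving $\tau_n=\mathcal{O}(\Delta t)$ with a constant depending only on $\max_i\|T\|_{C^2(\Omega_i\setminus\partial\Omega_i)}$, which is finite by standard ODE regularity on each subinterval.

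Finally, since there are at most $N-1$ irregular grid points but $\mathcal{O}(1/\Delta t)$ regular ones, and since the local truncation error bound is uniformly $\mathcal{O}(\Delta t)$ in both cases, summation of the local errors (together with the stability already inherent in the Euler-type recursion on each subinterval) yields the global truncation error bound $\mathcal{O}(\Delta t)$, completing the argument.
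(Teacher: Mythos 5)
Your proposal is correct and follows essentially the same route as the paper: treat regular points as standard forward Euler, and at irregular points show that the choice of $c_1^n,c_2^n$ and the correction term $\bar C^n$ cancels all $\mathcal{O}(1)$ terms so that the residual is the Taylor remainders $\mathcal{O}((t_{n+1}-\alpha_i)^2)$ and $\mathcal{O}((t_n-\alpha_i)^2)$ scaled by $1/\Delta t$, hence $\mathcal{O}(\Delta t)$. You spell out the jump-condition bookkeeping and the passage from local to global error a bit more explicitly than the paper does, but the argument is the same.
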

\begin{proof}
We begin by showing the scheme is consistent at irregular grid points. We substitute the finite difference weights found in \eqref{stateconstants} and correction term, \eqref{statcorrection}, into the numerical scheme for the state equation in \eqref{statemethod}. We note that the local truncation error of the method $\tau$ becomes
\begin{align*}
    \tau = \frac{1}{\Delta t} \mathcal{O}((t_{n+1} - \alpha_i)^2) - \frac{1}{\Delta t} \mathcal{O}((t_{n} - \alpha_i)^2).
\end{align*}
We note that at the irregular grid point $t_n$, $t_n \leq \alpha_i \leq t_{n+1}$, and $t_{n+1} -t_n = \Delta t$. Therefore,
\begin{align*}
    &\tau = \frac{1}{\Delta t} \mathcal{O}((t_{n+1} - \alpha_i)^2) - \frac{1}{\Delta t} \mathcal{O}((t_{n} - \alpha_i)^2) \leq \frac{1}{\Delta t} \mathcal{O}(\Delta t^2) - \frac{1}{\Delta t} \mathcal{O}(\Delta t^2) = \mathcal{O}(\Delta t)\\
    &\implies \tau = \mathcal{O}(\Delta t),
\end{align*}
which is to say that the method is a first-order method at the irregular grid points.
We then observe that
\begin{align*}
   \lim_{\Delta t \to 0} \tau = 0,
\end{align*}
so the method at irregular grid points is consistent as well.

Next we refer to \cite{gautschi1997numerical} to show that the scheme is consistent at regular grid points. We note that the scheme at regular grid points is the standard forward Euler method. This scheme has a local truncation error of $\mathcal{O}(\Delta t)$, so the scheme is first-order at regular grid points, and moreover is consistent because  
\begin{align*}
   \lim_{\Delta t \to 0} \mathcal{O}(\Delta t) = 0.
\end{align*}
We have shown that the scheme for the state equation is first-order accurate and consistent at both the regular and irregular grid points, so the state scheme is consistent and first-order accurate.
\end{proof}
We omit a proof that shows the of the adjoint scheme is first-order and consistent because the argument to prove the result is identical to the state equation proof, just applied to the adjoint scheme. 

Next, we show the stability of the state and adjoint schemes by referring to the literature. In both cases we write the equations in the form:
\begin{align*}
    &\frac{dT}{dt} = \Psi_1(T,t), \\
    &\frac{d \lambda}{dt} = \Psi_2(\lambda,T,t).\\
\end{align*}
Because $\Psi_1$ and $\Psi_2$ are Lipschitz with respect to the variables $T$, and $\lambda$ on $\Omega_i \setminus \partial \Omega_i$ for all $i=1..N$ , then a one step method is stable as shown in \cite[Theorem 5.7.1, p. 345]{gautschi1997numerical}. Our methods for the state and adjoint scheme are one step methods, and the correction terms at irregular grid points for both the state and adjoint scheme are bounded constants (which means $\Psi_1$ and $\Psi_2$ are Lipschitz for regular and irregular grid points) so then the overall scheme is stable.  

Now we show the convergence of the state and adjoint schemes. We again refer to the literature, where if a one step method is consistent and stable, then it is convergent \cite[Theorem 5.7.2, p. 348]{gautschi1997numerical}. Since we have proven the consistency and stability of the method for both regular and irregular grid points we can establish the convergence of the solution at regular and irregular grid points due to this theorem. Therefore, we have established the consistency, stability, and convergence of both the state and adjoint numerical schemes.

We note that in our framework even though the solutions are not continuous on the entire mesh, it is on the pieces, which allows us to establish the consistency, stability and convergence of our method due to our IIM approach. If a schemes consistency, stability, and convergence has been established for differential equations with high regularity, those proofs can traditionally be altered slightly to establish these properties for the irregular grid points. To our knowledge this has not been possible for finite-difference methods applied to bang-bang control problems before this work. 
\section{The Discrete State and Adjoint Systems}\label{S:disstateobj}
Suppose we are given a control $v$, then augmented IIM for the state equation can be written as the block matrix system:
\begin{equation}
    \begin{bmatrix}
\boldsymbol{A_s} & \boldsymbol{B_s} \\
\boldsymbol{C_s} & \boldsymbol{D_s} 
\end{bmatrix}	\begin{bmatrix}
\boldsymbol{T} \\
\boldsymbol{q}
\end{bmatrix} = \begin{bmatrix}
\boldsymbol{S_T} \\
\boldsymbol{S_q}
\end{bmatrix},		
\end{equation}

where 
\begin{align*}
    &\boldsymbol{A_s} \in \mathbb{R}^{N_t +1 \times N_t+1}, \quad &\boldsymbol{B_s} \in \mathbb{R}^{N_t+1\times N-1}, \\
    &\boldsymbol{C_s} \in \mathbb{R}^{N-1\times N_t+1}, \quad &\boldsymbol{D_s} \in \mathbb{R}^{N-1\times N-1}, \\
    &\boldsymbol{S_T} \in \mathbb{R}^{N_t+1\times 1}, \quad  &\boldsymbol{S_q} \in \mathbb{R}^{N-1\times 1}, \\
    &\boldsymbol{T} \in \mathbb{R}^{N_t +1 \times 1}, \quad &\boldsymbol{T}_i = T(t_{i-1}), \quad i=1,..,N_t+1, \\
        &\boldsymbol{q} \in \mathbb{R}^{N-1 \times 1}, \quad &\boldsymbol{q}_i = q_i, \quad i=1,..,N-1.
\end{align*}
In the rows of the upper block of the matrix system we enforce the time step scheme based on if a row corresponds to an irregular or regular grid point as discussed previously. In the lower block rows, we enforce the relationship between the state and state augmented variable for the zero order jump condition.

In a similar manner given $\boldsymbol{T}, \boldsymbol{q}, $ and control $v$, augmented IIM for the adjoint equation can be written as the block matrix system:

\begin{equation}
    \begin{bmatrix}
\boldsymbol{A_{\lambda}} & \boldsymbol{B_{\lambda}} \\
\boldsymbol{C_{\lambda}} & \boldsymbol{D_{\lambda}} 
\end{bmatrix}	\begin{bmatrix}
\boldsymbol{\lambda} \\
\boldsymbol{q^{\lambda}}
\end{bmatrix} = \begin{bmatrix}
\boldsymbol{S_\lambda} \\
\boldsymbol{S_{q^{\lambda}}}
\end{bmatrix},		
\end{equation}

where
\begin{align*}
    &\boldsymbol{A_{\lambda}} \in \mathbb{R}^{N_t +1 \times N_t+1}, &\boldsymbol{B_{\lambda}} \in \mathbb{R}^{N_t+1\times N-1}, \\
    &\boldsymbol{C_{\lambda}} \in \mathbb{R}^{N-1,N_t+1}, \quad   &\boldsymbol{D_{\lambda}} \in \mathbb{R}^{N-1\times N-1}, \\
    &\boldsymbol{S_{\lambda}} \in \mathbb{R}^{N_t+1\times1}, \quad &\boldsymbol{S_{q^\lambda}} \in \mathbb{R}^{N-1 \times 1}, \\
    &\boldsymbol{\lambda} \in \mathbb{R}^{N_t +1 \times 1} \quad &\boldsymbol{\lambda}_i = \lambda(t_{N_{t}-i+1}), \quad i=1,..,N_t+1, \\
        &\boldsymbol{q^\lambda} \in \mathbb{R}^{N-1 \times 1} \quad &\boldsymbol{q^\lambda}_i = q_i^\lambda, \quad i=1,..,N-1.
\end{align*}
Like in the case of the state equation, we enforce the scheme at regular and irregular grid points in the upper block, and in the lower block enforce the augmented variable on the state.

A consideration of this paper is not to derive appropriate preconditions to apply to the augmented state or augmented adjoint systems. However, constructing such a preconditioner would help speed up the computational time required to solve the systems.

\section{Discretization of The Objective Function and Gradient}
In Section \ref{S:disstateobj}, we wrote the discretization schemes to produce a discretized state solution $\boldsymbol{T}$, and discretized adjoint solution $\boldsymbol{\lambda}$, for a fixed control $v$. In this section we show how we produce an objective function value and gradient for these discretized solutions.

In our optimal control problem the objective function is:
\begin{equation*}
    \frac{1}{2} \int_{\Omega} (T - \hat{T})^2dt.
\end{equation*}
We approximate the objective function with the trapezoid rule:
\begin{equation}
    \mathcal{J} =  \frac{1}{2} \int_{\Omega} (T - \hat{T})^2dt =  \int_{\Omega} \hat{f}(t)dt  = \sum_{i=1}^{N_t} \frac{\Delta t}{2} \bigg( \hat{f}(t_{i-1}) + \hat{f}(t_i) \bigg) + \mathcal{O}(\Delta t^2),
\end{equation}
where 
\begin{equation}
    \hat{f}(t) = \frac{1}{2} (T(t) - \hat{T}(t))^2.
\end{equation}
For simplicity of notation, we define the numerical approximation of the objective function to be
\begin{equation}
    \hat{J}^{\Delta t} = \sum_{i=1}^{N_t} \frac{\Delta t}{2} \bigg( \hat{f}(t_{i-1}) + \hat{f}(t_i) \bigg).
\end{equation}

We approximate the $i-$th component of the gradient, associated with the binary variable $v_i$, using the trapezoid rule:
 \begin{equation}
      \frac{\partial \mathcal{J}}{ \partial v_i} = -\int_{\Omega_i} C \lambda dt =  \int_{\Omega_i} \hat{g}(t) dt =\sum_{k=1}^{\hat{N}_t^i} \frac{\Delta t}{2} \bigg( \hat{g}(\hat{t}_{k-1}) + \hat{g}(\hat{t}_k) \bigg) + \mathcal{O}(\Delta t^2),
\end{equation}
where 
\begin{equation}
    \hat{g}(t) = -C\lambda(t).
\end{equation} 
The notation $\hat{N}_t^i$ corresponds to the number of time steps in the mesh that exist within $\Omega_i$, and $\hat{t}$ corresponds to the specific time steps that exist in $\Omega_i$.
Again, for simplicity of notation, we define the numerical approximation of $i$-th component of the gradient vector to be 
\begin{equation}
    \bigg(g_{\Delta t}\bigg)_i = \sum_{k=1}^{N_t^i} \frac{\Delta t}{2} \bigg( \hat{g}(\hat{t}_{k-1}) + \hat{g}(\hat{t}_k) \bigg)
\end{equation}

\section{A trust-region method for bang-bang control}\label{s:tr}
In \cite{vogt2020mixed}, a trust-region method is introduced to solve binary partial-differential equation constrained optimization problems. We note that this method is readily applicable to our problem, and is outlined in Algorithm \ref{A:Steepest}. The algorithm can be viewed as solving a sequence of differential equations and binary knapsack problems to yield a local minimizing binary control. A strength of this algorithm is that the computational effort is not bottlenecked by the number of binary variables present in the optimal control problem. This is because the binary variables appear in the trust-region subproblem; the trust-region subproblem is a knapsack problem, so it is computationally inexpensive to solve even for a large amount of binary variables. Instead, the computational effort required to execute the trust-region method is focused on solving the state and adjoint equation, and then building the gradient.

\begin{algorithm}[t!]
\SetAlgoVlined
\caption{Steepest-Descent Trust-Region Algorithm.}\label{A:Steepest} 
Given initial trust-region radius $\Delta_{0} = \bar{\Delta} \geq 1$ and initial guess $v^{(0)} \in \{0,1\}^{N}$\\
Select an acceptance step parameter $\bar{\rho}$, and set $k \gets 0$\\
Evaluate the objective function $\hat{J}_{\Delta t}^{(k)} = \hat{J}^{\Delta t}(v^{(k)})$ and the gradient $g_{\Delta t}^{(k)} = \nabla_{v} \hat{J}^{\Delta t}(v^{(k)})$\;

\While{$\Delta_k \geq 1$}{
  Solve the trust-region (knapsack) subproblem for $\widehat{v}$:
  \begin{align*}
    \widehat{v} = \underset{v}{\text{minimize}} \quad  &g_{\delta t}^{(k)^T}\left(  v -v^{(k)} \right) + \hat{J}_{\delta t}^{(k)} &  \\ 
    \quad & \|v-v^{(k)}\|_1 \leq \Delta_k  &   \\
    &  v\in \{0,1 \}^{N} & 
  \end{align*}
  Evaluate the objective $\hat{J}_{\Delta t}(\widehat{v},T(\widehat{v}))$ by solving state equations with $\widehat{v}$\;
  
  Compute the ratio of actual over predicted reduction: $\rho_k = \frac{\hat{J}_{\Delta t}^{(k)} - \hat{J}_{\Delta t}(\widehat{v},\psi(\widehat{v}))}{-\big( g_{\Delta t}^{(k)} \big)^T \big( \widehat{v} - v^k\big)}$\;

  \uIf{$\rho_k > \bar{\rho}$}{
    Accept the step: $v^{(k+1)} = \widehat{v}$, and evaluate the gradient $g_{\Delta t}^{(k+1)} = \hat{J}'(v^{(k+1)})$\\
    \lIf{$\| v^{(k+1)} - v^{(k)} \|_1 = \Delta_k$,}{
    	increase the trust-region radius $\Delta_{k+1} = 2\Delta_{k}$ 
    }	
  }
  \uElseIf{$\rho_k >0$}{
    Accept the step $v^{(k+1)}= \widehat{v}$, and evaluate the gradient $g_{\Delta t}^{(k+1)} = \hat{J}'(v^{(k+1)})$\\
    Keep trust-region radius unchanged $\Delta_{k+1} = \Delta_k$
  }
  \Else{
    Reject the step, set $v^{(k+1)} = v^{(k)}$, and copy the gradient $g_{\Delta t}^{(k+1)} = g_{\Delta t}^{(k)}$\\
    Reduce the trust-region  radius $\Delta_{k+1} = \text{floor}\left(\frac{\Delta_k}{2}\right)$\;
  } % end if

  Set $k \gets k+1$\; 
} % end while                                          
\end{algorithm}

\section{Numerical experiments}\label{numerics}

We provide numerical evidence that our approach to solve the bang-bang state equation, using IIM, provides the correct numerical solution. In our simulations, we solve \ref{strongstate} with parameters $f=0, \; C=3,  \; K=1,  \; T_0=70,  \; T_s=50,  \; t_{\text{final}}=10, N=10.$, and choose $v$ in the following manner: 
\begin{equation}
    \begin{cases}
         v_i= 1 \quad \text{$\; i$ is odd} \\
          v_i= 0 \quad \text{$\;i$ is even},
    \end{cases}
\end{equation}
for $i=1,..,N$.
In this instance, we can derive the exact solution to the state equation:
\begin{equation}
    T_{\text{true}}(t)=\begin{cases}
         T_0e^{-Kt} + T_s - T_se^{-Kt}  + \frac{C}{K} -  \frac{C}{K}e^{-Kt} \quad t\in \Omega_i, \text{$i$ is odd} \\
          T_0e^{-Kt} + T_s - T_se^{-Kt}   \quad t\in \Omega_i, \text{$i$ is even}. \\
    \end{cases}
\end{equation}
We numerically solve \ref{strongstate} in several instances containing a different number of time steps i.e $N_t =2^p$ for $p=5..11$ using both Eulers method and our IIM approach. We record the maximal of the absolute error between the numerical and true solution in each simulation, and calculate the convergence order as we refine the mesh for both Euler method and our IIM approach; we show these results in Table \ref{T:statecomp}. We note that we calculate the error in the infinity norm, and the convergence $\gamma^*$ by:
\begin{align*}
    \gamma^* = \frac{\log\bigg(\frac{E^*_{\frac{N_t}{2}}}{E^*_{N_t}}\bigg)}{\log(2)}.
\end{align*}
In the IIM approach \cite{li2001immersed}, we expect the average convergence order to reflect the overall convergence of the method. If we average the convergence order for the IIM approach in \ref{T:statecomp}, we see that the order of the method does indeed reflect a convergence order of 1. In comparison, as we observe the convergence order for Euler's method, we see that the convergence order average is not close to 1. Instead, as we refine the mesh the convergence order is approaching 0. This is not surprising, because the theoretical assumption to use Euler's method is $T\in C^2(\Omega)$, which is violated in this case. In the absence of such an assumption, one cannot guarantee the consistency, let alone the convergence of the numerical method. In Figure \ref{F:solution}, we visually show the solutions generated from the IIM and Euler approach. We observe that the IIM captures the jumps in the state solution due to the bang-bang control, while Euler's method does capture these jumps accurately. It is crucial to solve the state equation correctly, and as we have demonstrated, the introduction of bang-bang controls will cause traditional numerical methods for ordinary-differential equations to fail.

On the optimal control front, being unable to solve the state equation correctly undoubtedly produces an incorrect adjoint solution. It should come as no surprise that the gradient, which is calculated with this incorrect state and adjoint solution, also will be incorrect. In Table \ref{T:fullcomp} we demonstrate what occurs when we use the IIM approach to calculate the state and adjoint equation. We first observe that both the state and the adjoint solutions achieve first-order convergence. This is to say that the state solution is first order accurate, but also when we pass this solution to the adjoint equation and use the IIM approach we maintain the first order accuracy. We use the state and adjoint solution to then create a gradient. In the table, we compare the difference of the gradient and the a forward difference approximation of the gradient in the infinity norm. We observe that the gradient also obtains first order convergence, which is expected. Our results demonstrate that not only using our IIM approach solves the state and adjoint equation in the presence of bang-bang control correctly, but also that the gradient produced by using the IIM generated state and adjoint solution is consistent. 

Since we have shown that our approach creates a consistent gradient, we now share results for our trust-region method applied to the problem. In our runs, we let $T_0=10,\; T_s=0 \; K=0.1 \; C=2 \; f=0, \; g=0 $ and $t_{\text{final}}=100$. In our numerical tests, we let $\hat{T}(t) = 5 + 0.5\sin(t)$. We test the trust-region method when the number of bangs are 100, 1000, and 10000. For all simulations we pick $N_t=20000$ to guarantee a resolved bang-bang control for all the scenarios mentioned. 

To execute our trust-region method, we need a feasible starting guess. While a random binary solution would be sufficient, in \cite{vogt2020mixed} it was observed that solving the relaxation of the problem ($0\leq v \leq 1)$ and then rounding was produced a minimum with lower objective function value. We choose to use the simple rounding heuristic:
\begin{align*}
    [v_{\text{round}}]_i = \begin{cases}
         0 \quad v_i < 1 \\
         1 \quad v_i \geq 1.
    \end{cases}
\end{align*}

In Table \ref{T:CPU} we report the computational effort required to solve the relaxation, the number of trust-region iterations required to find a minimizing solution, and the total time required to do the two pieces of the trust region method: solving the state and adjoint equation then constructing the gradient and executing the trust-region subproblem. We observe the amount of time taken to solve the knapsack problem is computationally negligible when compared to the time required to build a gradient. This means that with our trust-region method for binary optimization the bottleneck is the bottleneck of doing classical optimal control; we observe that the binary variables are almost inconsequential. Solving the adjoint and state equation in the most efficient manner possible is out of the scope of this work, but we note that carefully constructed preconditioners for the state and adjoint equation can also drastically decrease the computational time to solve the state and adjoint systems. 

In Figure $\ref{fig:tr}$, we show the progress of the trust-region methods radius and objective-function value as a function of iteration count. We observe that as we increase the amount of bangs present, we increase the amount of trust-region iterations required to find a minimizing solution. Moreover we reduce the objective-function value quite quickly early in execution of the trust-region method, but then see slow convergence to a local minimum; an artifact of the non-convexity of the objective function.
 
In Table \ref{T:obj} we report the objective function value and in Figure \ref{fig:stateandcontrol} we show the optimal control, state, and target function for a different amount of bang controls. In the table, we see that we are able to make a significant percent reduction in all cases, and that the percent reduction is observed to be greater when more bangs are present. In the figure, we observe that we are successful at producing controls that create states which model the behavior of the target we are attempting to match. We observe that as we increase the number of bangs, we find an optimal control that makes the state model the target function more accurately. These results indicate that using our IIM approach, combined with an adjoint based gradient and trust-region method, does indeed yield a successful control strategy.

\begin{table}[htbp]
\centering
\caption{Error (E) and convergence ($\gamma$)  table for Euler (superscript E) method and the IIM (superscript IIM) approach applied to the state equation \ref{strongstate}. }\label{T:statecomp}
\begin{tabular}{|c||r|r|r|r|}
\hline
\multicolumn{1}{|c||}{} & \multicolumn{2}{c|}{IIM} &  \multicolumn{2}{c|}{Euler Method} \\
$N_t$  & $E_{N_t}^{\text{IIM}}$ & $\gamma_{N_t}^{\text{IIM}}$& $E_{N_t}^{\text{E}}$ & $\gamma_{N_t}^{\text{E}}$ \\ \hline \hline
32 &    3.140         &           &   3.140 &                  \\ \hline
64 &         1.255    &          1.322           &  1.517 &  1.048                     \\ \hline
128 &           0.574   &             1.124        & 1.431 &    0.084                 \\ \hline
256 &       1.497 &                 -1.382   & 1.369 &              0.064   \\ \hline
512 &       0.135  &                    3.467  &  1.329 &           0.042      \\ \hline
1024 &    0.067&                    1.013   &   1.328    &             0.001      \\ \hline
2048 &  0.034    &                  0.944       &    1.323   &              0.005      \\ \hline
\end{tabular}
\end{table}

\begin{figure}[htbp]
  \centering
\includegraphics[width=\textwidth]{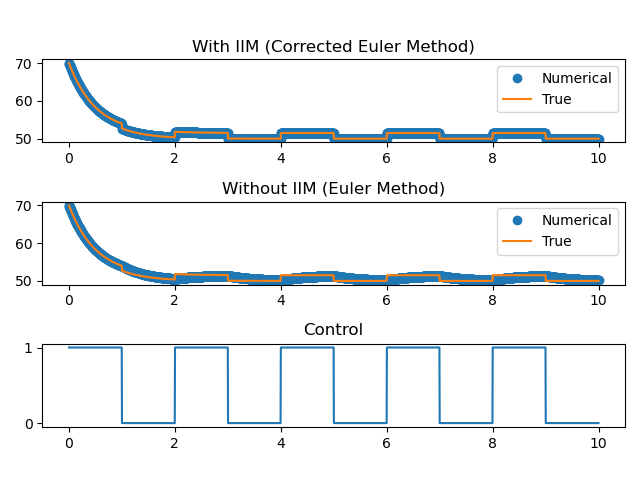}
\caption{Comparison of the numerical solution yielded by the IIM and Euler method to the exact solution for a fixed bang-bang control for $N_t=2048$ from Table \label{F:solution}}
\end{figure}

\begin{table}[htbp]
\centering
\caption{Error and convergence table for the state, adjoint and gradient .}\label{T:fullcomp}
\begin{tabular}{|c||r|r|r|r|r|r|}
\hline
\multicolumn{1}{|c||}{} & \multicolumn{2}{c|}{State} &  \multicolumn{2}{c|}{Adjoint} &   \multicolumn{2}{c|}{Gradient} \\
$N_t$  & $E_{N_t}^{\text{IIM}}$ & $\gamma_{N_t}^{\text{IIM}}$& $E_{N_t}^{\text{IIM}}$ & $\gamma_{N_t}^{\text{IIM}}$ & $E_{N_t}^{\text{IIM}}$ & $\gamma_{N_t}^{\text{IIM}}$ \\ \hline \hline
32 &    19.834         &           &   44.528 &  &36.177 &                \\ \hline
64 &         7.607    &   1.382               &  9.945 &  2.162           & 19.417&       0.897    \\ \hline
128 &           3.375   &      1.172               & 4.283 &    1.215  & 9.933&             0.966   \\ \hline
256 &       1.678 &         1.008   & 1.369 &              1.126  &4.919 & 1.013 \\ \hline
512 &       0.832  &            1.011  &  1.329 &           1.090   & 2.517&    0.966\\ \hline
1024 &    0.415&                    1.005   &  0.448   &             1.039  &1.263 & 0.994    \\ \hline
2048 &  0.207    &                  1.006       &   0.223   &              1.007    &0.632 & 0.998   \\ \hline
\end{tabular}
\end{table}

\begin{figure}[htbp]
\centering
\subfloat[100 bangs.]{\label{fig:tr1}\includegraphics[width=0.45\textwidth]{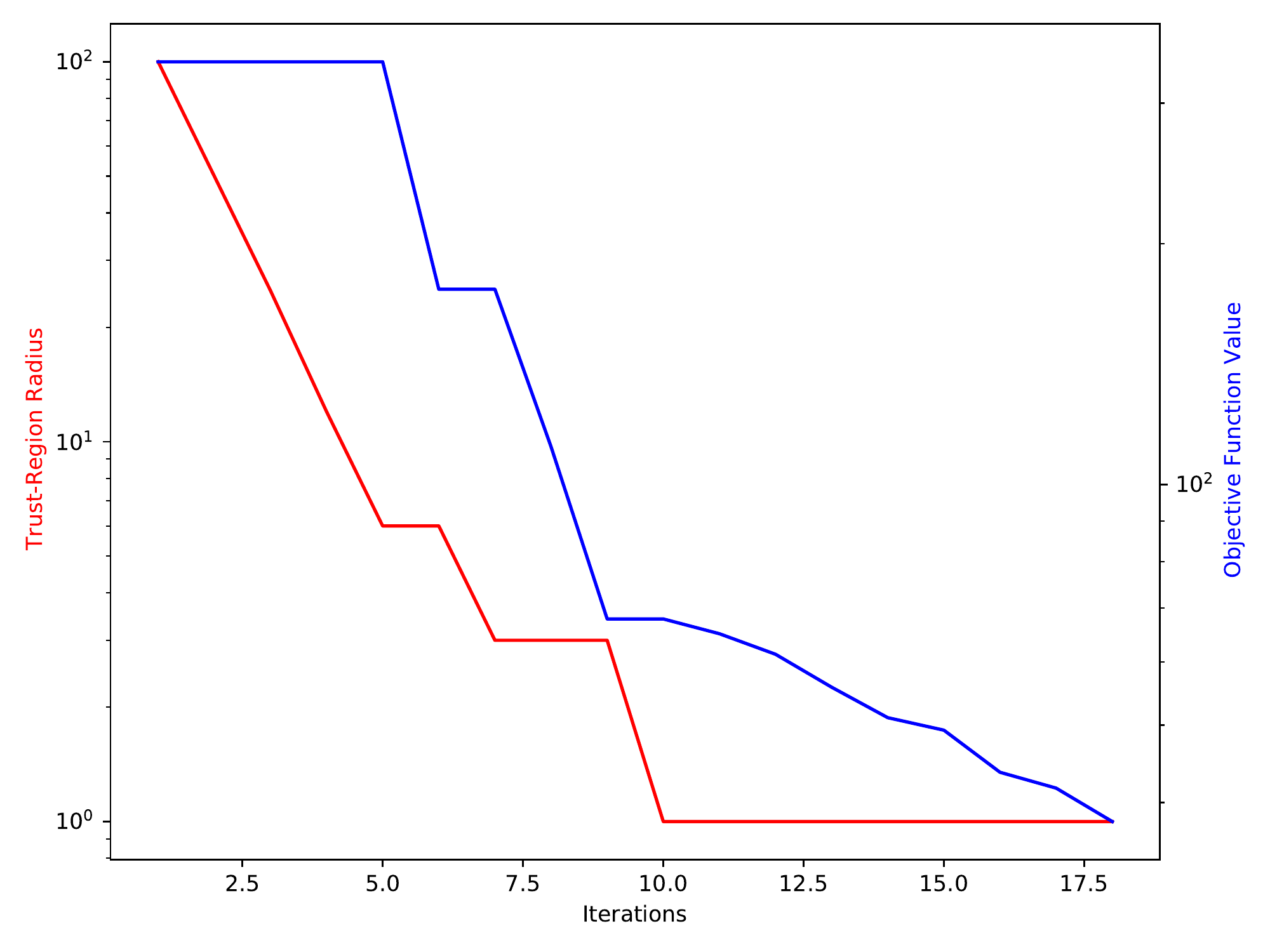}}
\subfloat[1000 bangs.]{\label{fig:tr2}\includegraphics[width=0.45\textwidth]{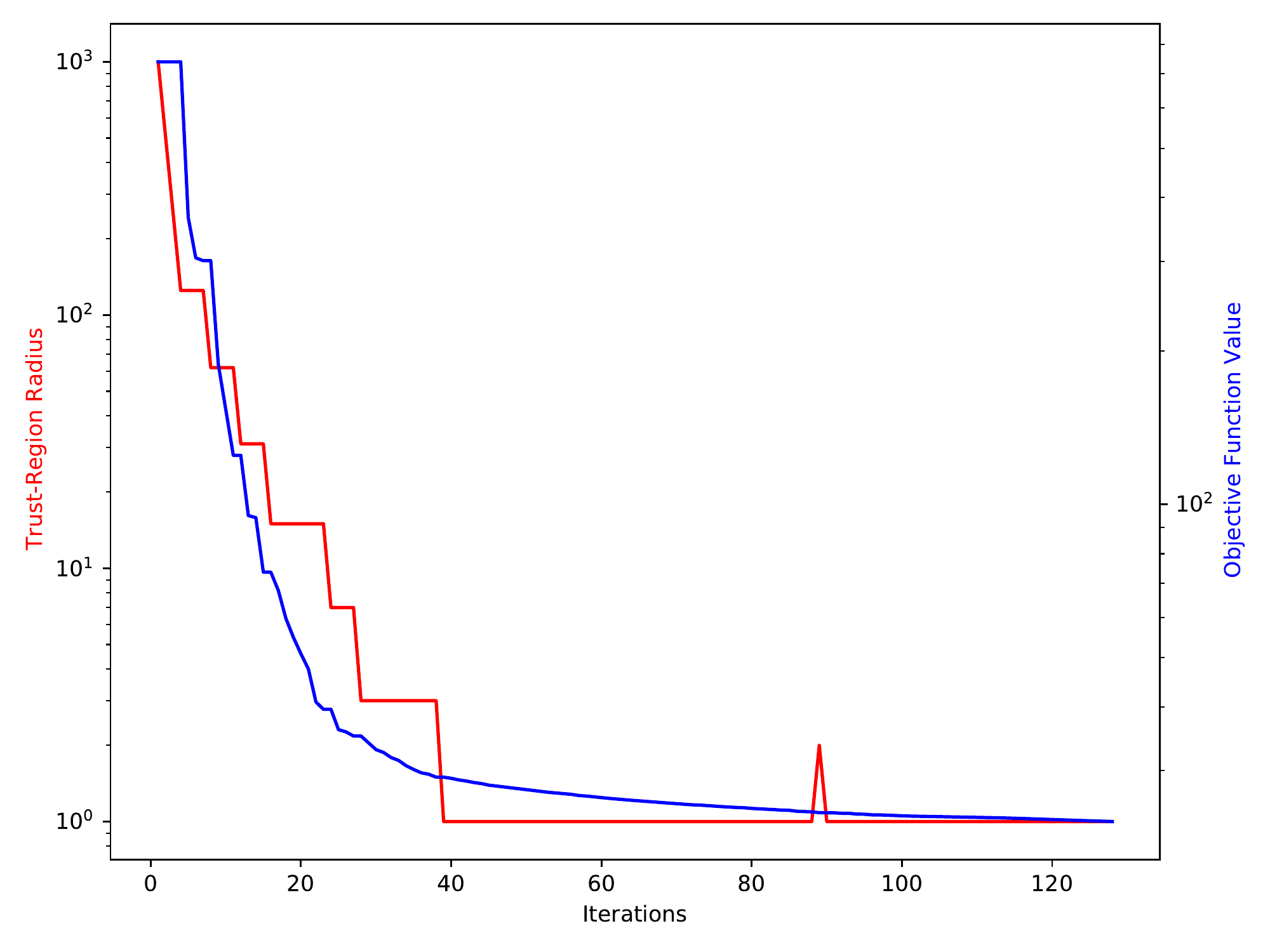}}
\begin{center}
\subfloat[10000 bangs.]{\label{fig:tr3}\includegraphics[width=0.45\textwidth]{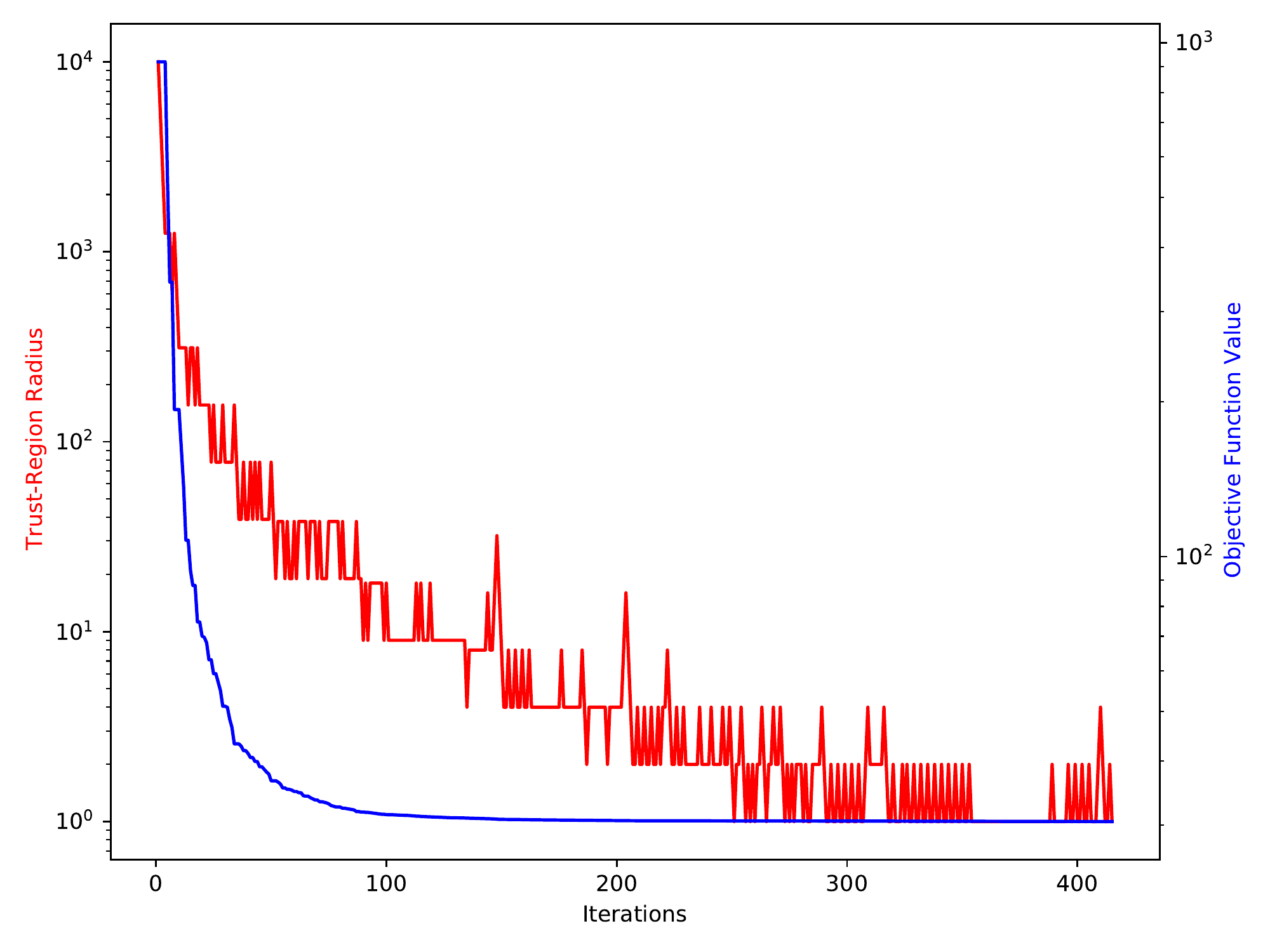}}
\end{center}
\caption{Trust-region method objective function value and trust-region radius evolution for 100,1000, and 10000 binary variables. On the left axis (red) we plot the objective function value as a function of iteration count, and on the right axis (blue) we plot the trust-region radius as a function of iteration count.}
\label{fig:tr}
\end{figure}

\begin{figure}[htbp]
\centering
\subfloat[State for 100 BV.]{\label{fig:rec1}\includegraphics[width=0.32\textwidth, trim=20 0 20 20, clip]{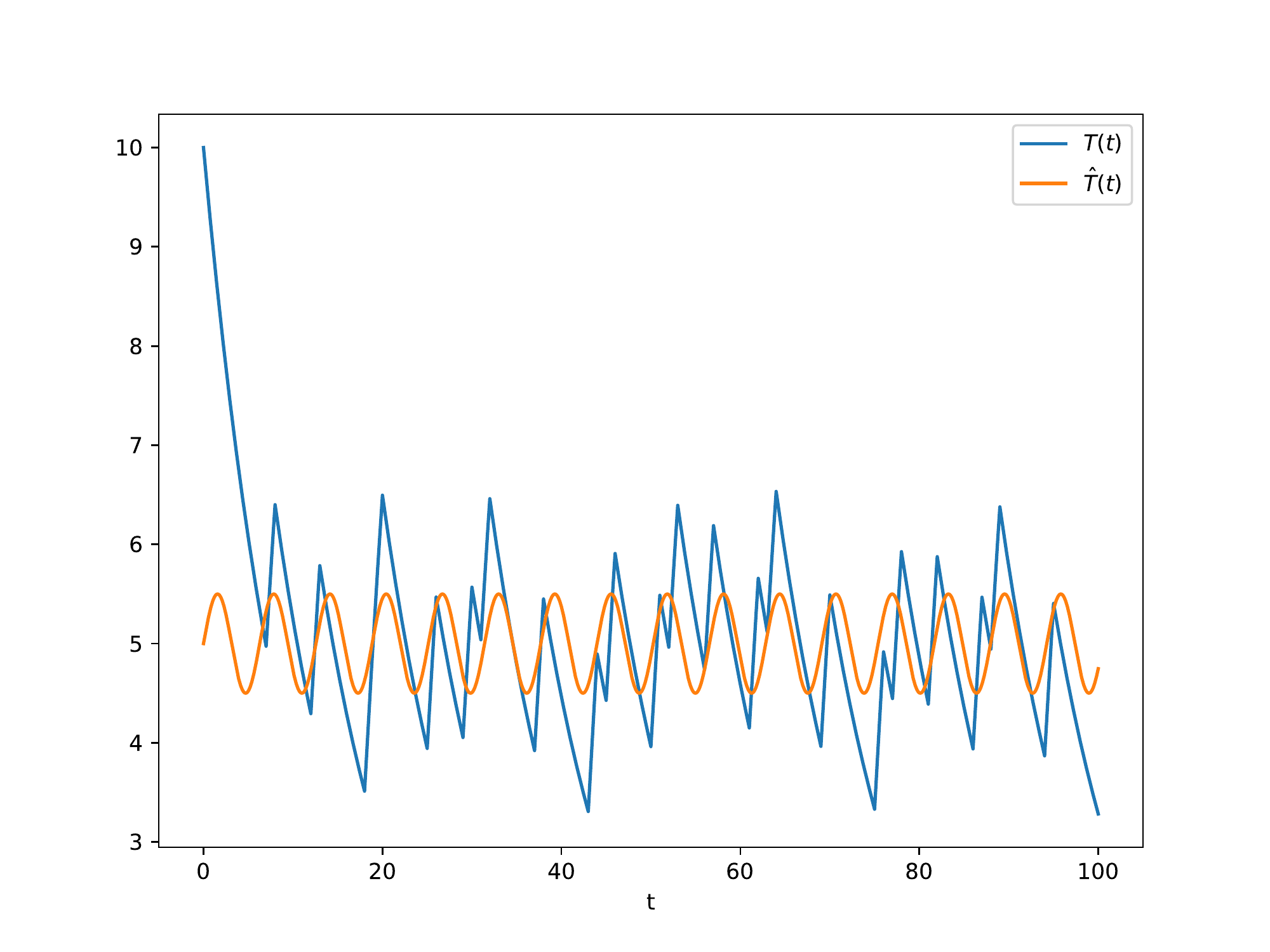}}
\subfloat[State for 1000 BV.]{\label{fig:sq1}\includegraphics[width=0.32\textwidth, trim=20 0 20 20, clip]{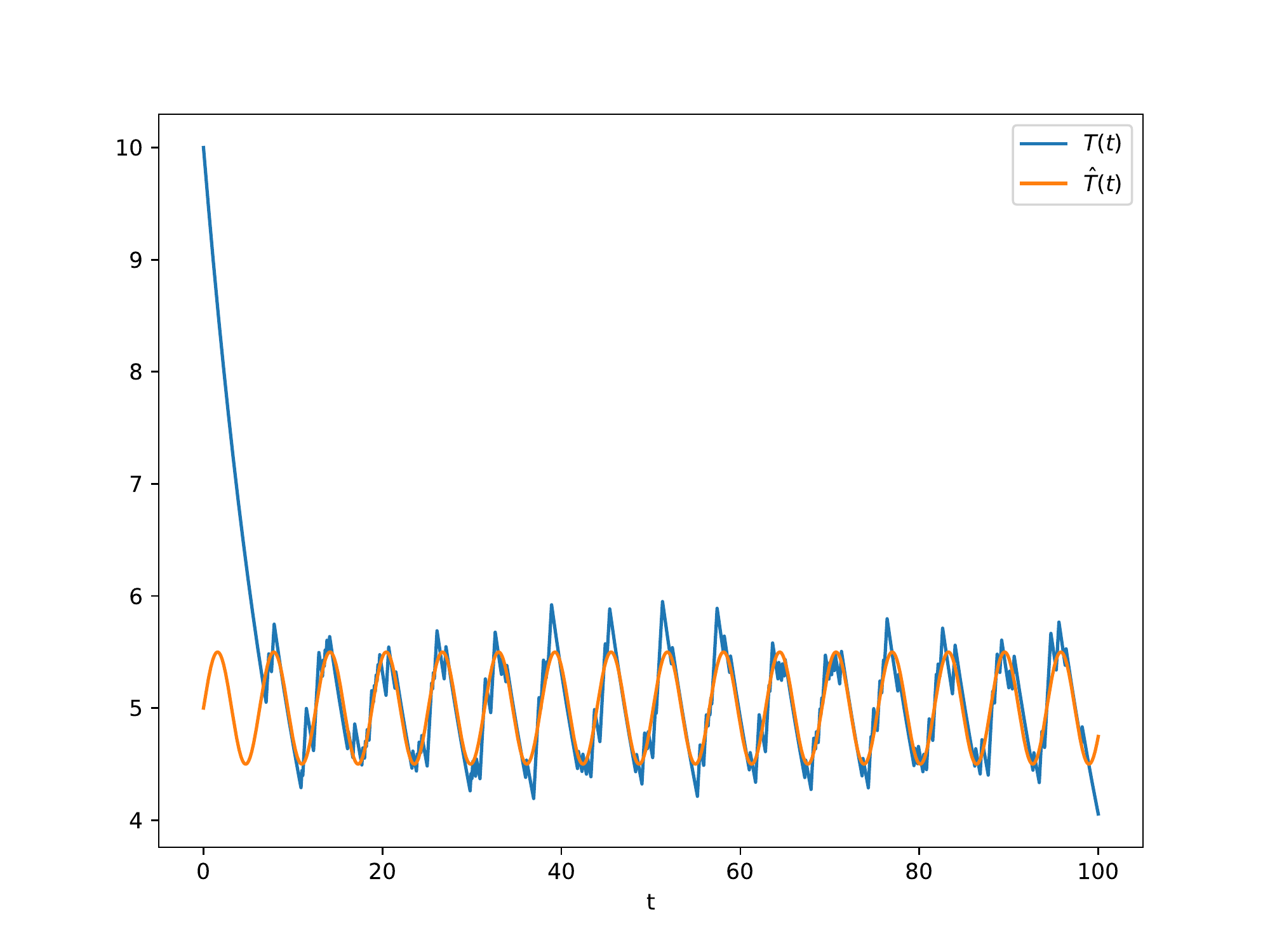}}
\subfloat[State for 10000 BV.]{\label{fig:circ1}\includegraphics[width=0.32\textwidth, trim=20 0 20 20, clip]{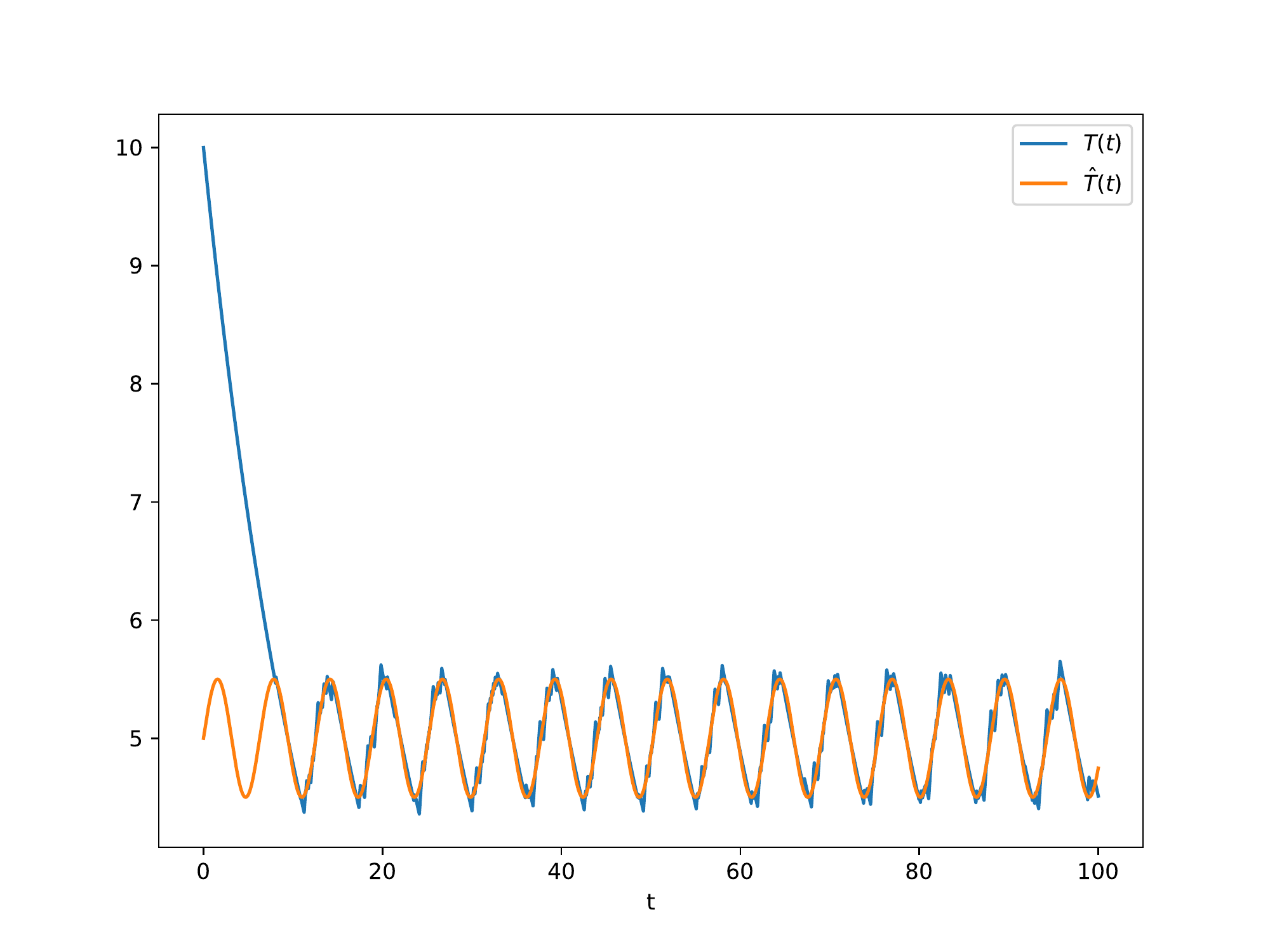}}\\
\subfloat[Control for 100 BV.]{\label{fig:rec2}\includegraphics[width=0.32\textwidth, trim=20 0 20 20, clip]{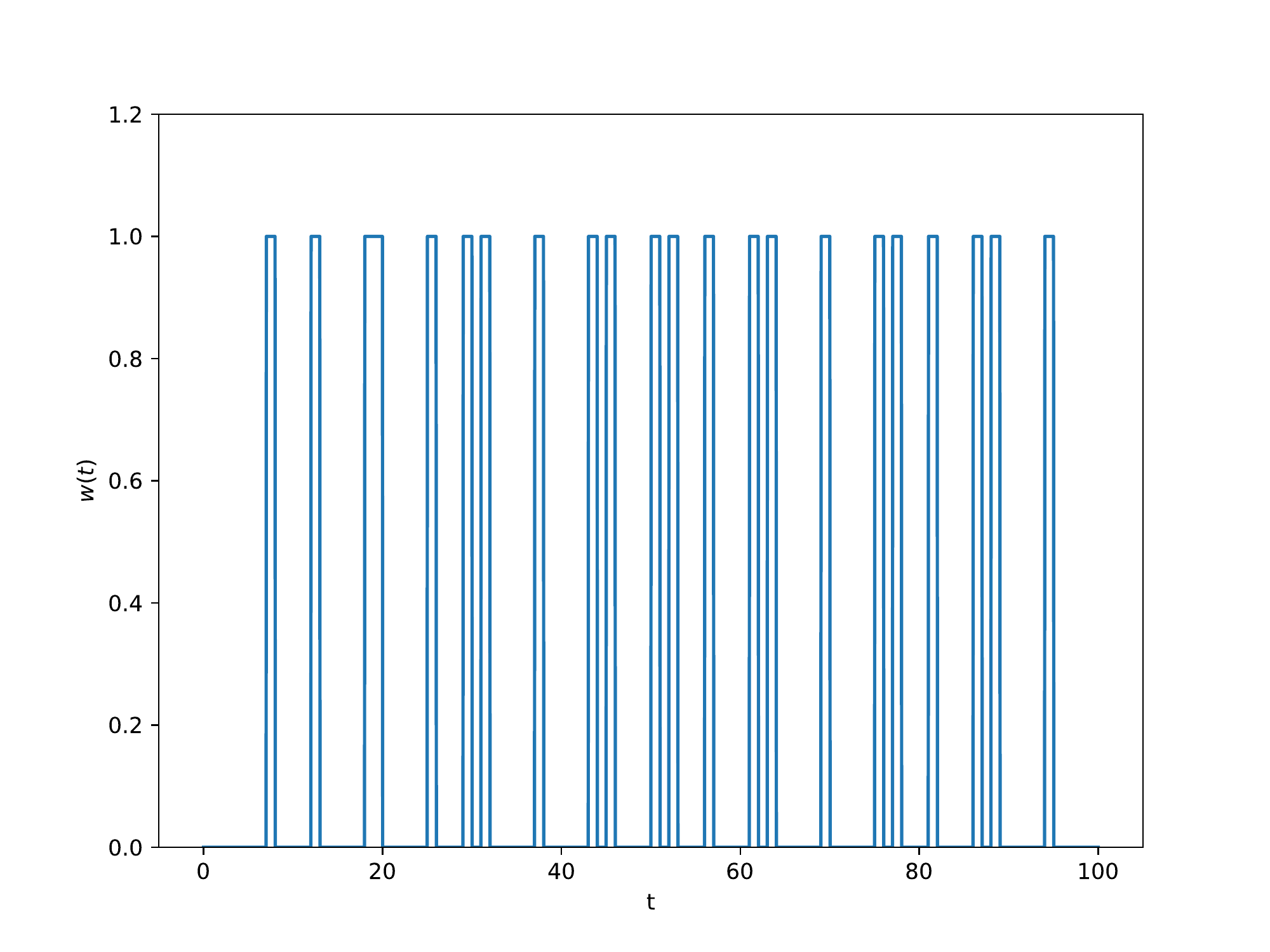}}
\subfloat[Control for 1000 BV.]{\label{fig:sq2}\includegraphics[width=0.32\textwidth, trim=20 0 20 20, clip]{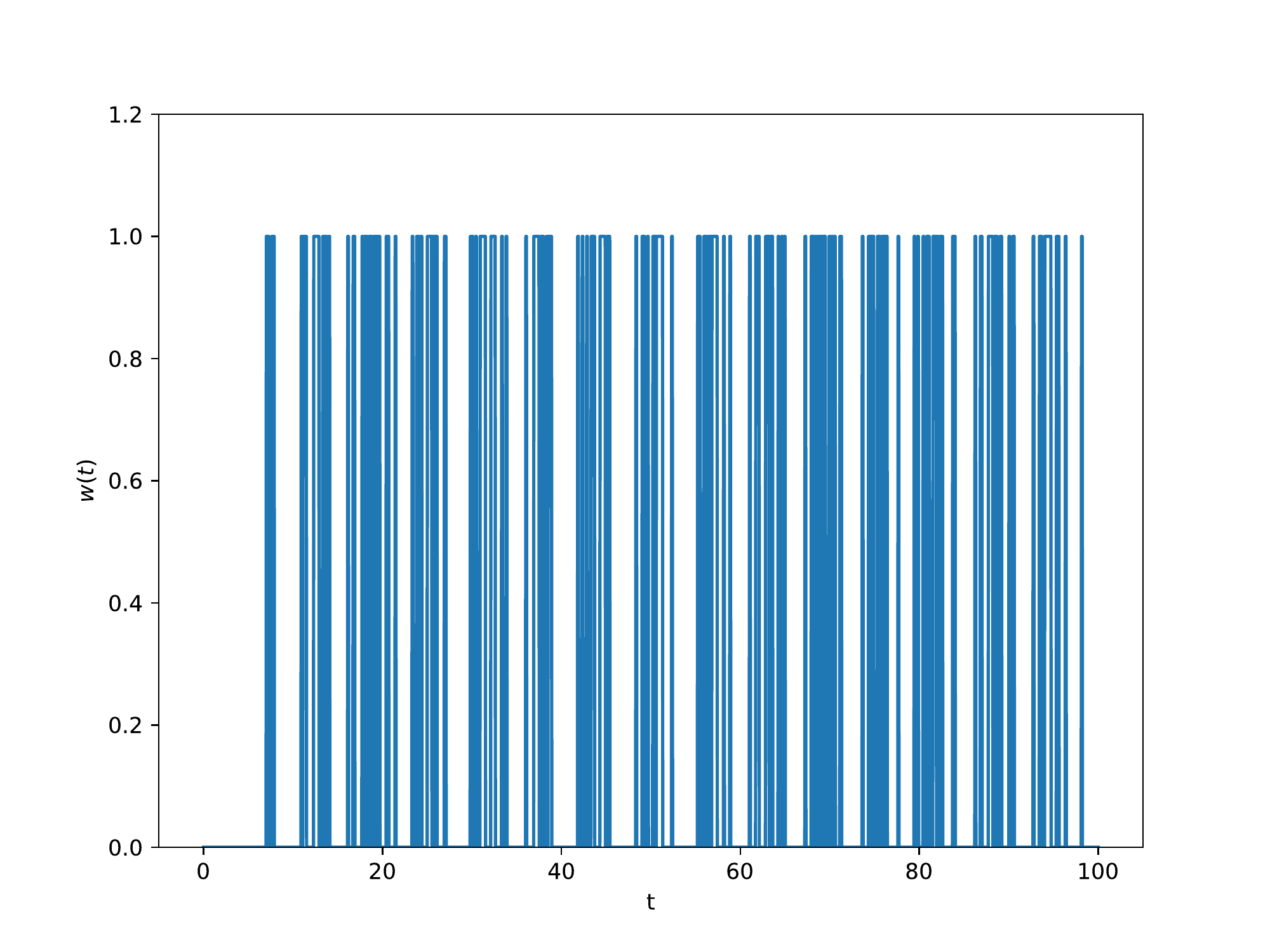}}
\subfloat[Control for 10000 BV.]{\label{fig:circ2}\includegraphics[width=0.32\textwidth, trim=20 0 20 20, clip]{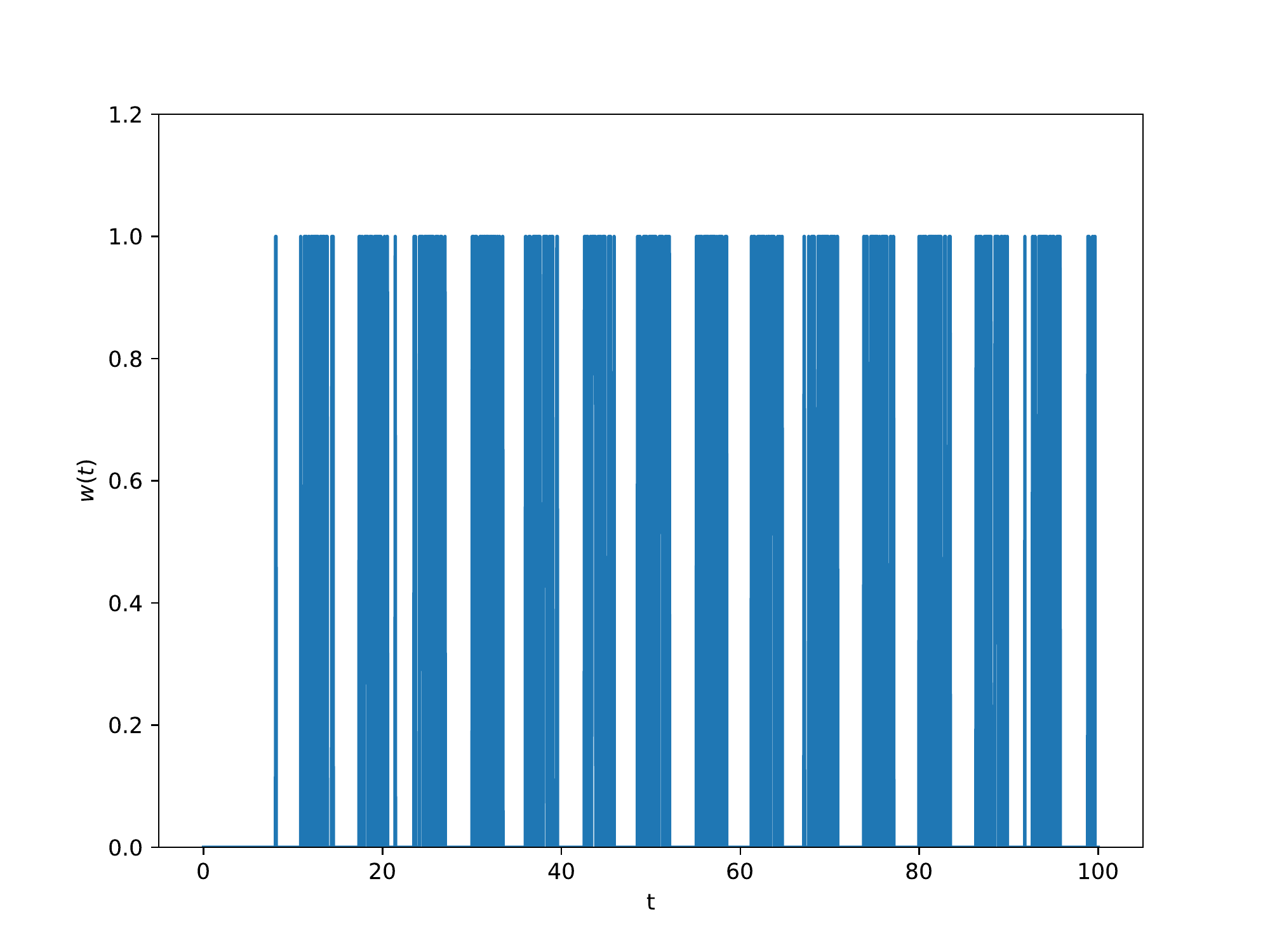}}
\caption{Plots of the state $T(t)$, target $\hat{T}(t)=5+0.5\sin(t)$, and bang control $w(t)$ for 100, 1000, and 10000 binary variables (BV). }
\label{fig:stateandcontrol}
\end{figure}

\begin{table}[htbp]
\centering
\caption{For 100, 1000, and 10000 binary variables we record the CPU times(s) for solving the relaxation to initial the trust-region method. In addition we record the CPU times(s) for computing the state, adjoint, and gradient; in addition to the time required to executing the trust-region sub problems (knapsack problems). These times are reflective of the amount of trust-region iterations required to find a minimizing bang-bang control.}\label{T:CPU}
\begin{tabular}{|r||r|r|c|c|}
\hline
\multicolumn{1}{|c||}{} & \multicolumn{3}{c|}{Solution CPU Time(s)} & \multicolumn{1}{c|}{Trust-Region} \\
 Number of Bangs &  Relaxation & ODEs/Gradient & MIPs & Iterations \\ \hline \hline
100 & 7999.89            &9372.31                 &         1.31                        &    18            \\ \hline
1000 & 5270.65            & 72173.45                   &            8.70                                                  &    128                  \\ \hline
10000 & 5637.85              & 720508.67                     &           218.09                                                &    415         \\ \hline
\end{tabular}
\end{table}

\begin{table}[htbp]
\centering
\caption{The objective function value before (OFVB) at the rounded relaxation (start of trust-region), objective function value after (OFVA) at the termination of trust-region, and the percent-reduction in the objective function value from the start to end of the trust-region method for 100, 1000 and 10000 binary variables.}\label{T:obj}
\begin{tabular}{|r||l|l|c|}
\hline
 Number of Bangs &  OFVB & OFVA & Percent Reduction ($\%$) \\ \hline \hline
100 & 337.93             &37.93                 &         88.77                      \\ \hline
1000 & 739.24           &   23.84                 &           96.77                                                              \\ \hline
10000 & 918.76             & 30.50                     &           96.68                                                          \\ \hline
\end{tabular}
\end{table}

\section{Solving general binary bang-bang problems with our approach}
In this Section we summarize how the our approach can be applied to more general reduced space binary bang-bang problems. We discuss the three key components in as much generality as possible : building the Lagrangian, building the IIM schemes for the state and adjoint system, and applying the trust-region method.\\
\textbf{Construct a Lagrangian.}\\
Suppose we have a bang-bang optimal control problem of the form:

\begin{equation}\label{eq:moregenproblem}
\begin{aligned}
& \underset{T,\; v}{\text{minimize}} & & \mathcal{J}(T(v),v)\\
& \text{subject to} 
  & &  C(T,v)=0 \quad \text{ on} \; \Omega,\\
   & & &   \Omega = \bigcup_{i=1}^{N }\Omega_i, \\
   & & &   w(t) = \sum_{i=1}^{N} v_i \mathbb{1}_{\Omega_i \setminus \partial \Omega_i  }(t)  \\,
    & & &  v_i \in \{0,1\} \quad \forall i=1...N,   \\
    & & & \Omega_i =[\tau_i,\tau_{i+1}]  \quad \forall \; i=1,...,N, \\
    & & & \tau_1 = 0, \quad \tau_{N+1} = t_{\text{final}}, \\
   & & & \Omega = [0,t_{\text{final}}],   \\
   & & & T(0) = T_0.
    \end{aligned}
\end{equation}

Where $C:t \rightarrow \mathbb{R}^k, $ $k\geq 1$. We define an adjoint variable, $\lambda:t \rightarrow \mathbb{R}^k$, and define the Lagrangian
\begin{equation}
    \mathcal{L}(T,\lambda,v) = \mathcal{J} + \int_{\Omega} \lambda^T Cdt.
\end{equation}
From the Lagrangian, we derive the restricted state and adjoint equation, and the gradient, as seen in Section \ref{s:laggrad}. We note that if there are additional constraints on the binary variable, which is not considered in our formulation, these constraints can be ignored in the Lagrangian and instead enforced in the trust-region subproblem. 

\textbf{Derive the augmented IIM Schemes for the state and adjoint system.}
Next, the state and adjoint problem are discretized using the IIM approach. Typically, we use a standard numerical method (in this paper Euler), as a foundation to construct a "corrected" scheme with the IIM approach. One can view our approach as the standard scheme being applied at regular grid points, while the "corrected" scheme is applied at irregular grid points. If the scheme that is used at the regular grid points has been shown to be consistent, stable, and convergent, then the scheme at the irregular grid points consistency, stability, and convergence proofs need to be only slightly altered. Having a methods that are consistent, stable, and convergent for the state and adjoint equation are critical. Because without the correct state and adjoint computation, it is impossible to have a consistent gradient, let alone to solve the overarching optimal control problem with any numerical optimization approach.   
\\
\textbf{Apply the trust-region method.}
In our work we first solve the relaxation of our optimal control problem, and then round the relaxation to produce a feasible starting guess to the binary optimal control problem. In our previous work, \cite{vogt2020mixed} we have observed that initializing our trust-region method with the rounded relaxation produced controls that yield lower objective function value than we when initialized our initial trust-region method with a random binary guess. Our trust-region methods computational effort is bottle-necked by only the state and adjoint computation, which one would have to perform for any reduced space optimal control problem. However, regardless of the amount of binary variables present in the problem we can solve the trust-region sub problem quickly.

\section{Conclusions}
In this work, we introduced a formal Lagrangian approach for solving bang-bang optimal control problems. We introduced a target optimal control and wrote the optimality conditions, which led to solving the state and adjoint equation using IMM to ensure that we obtain a numerical solutions that are consistent, stable, and convergent. By solving the state and adjoint equation we are able to produce a gradient, and an objective function value that is associated with a fixed bang-bang control. We use the objective function value and gradient to execute our trust-region method. We showed numerical simulations that both demonstrated the importance of not applying standard numerical methods to the state equation, but also that our approach generates bang-bang controls that meets our goals. This work provides a blueprint that readers can use to as a foundation to solve their own bang-bang problems quickly and efficiently.

\section{Acknowledgements}
This document was prepared as an account of work sponsored by an agency of the
United States government. Neither the United States government nor Lawrence
Livermore National Security, LLC, nor any of their employees makes any warranty,
expressed or implied, or assumes any legal liability or responsibility for the accuracy,
completeness, or usefulness of any information, apparatus, product, or process disclosed,
or represents that its use would not infringe privately owned rights. Reference herein to
any specific commercial product, process, or service by trade name, trademark,
manufacturer, or otherwise does not necessarily constitute or imply its endorsement,
recommendation, or favoring by the United States government or Lawrence Livermore
National Security, LLC. The views and opinions of authors expressed herein do not
necessarily state or reflect those of the United States government or Lawrence Livermore
National Security, LLC, and shall not be used for advertising or product endorsement
purposes. This document has been approved to be released for widespread consumption according to order LLNL-JRNL-821389. In addition, this material was based upon work supported by the U.S. Department of Energy, Office of Science, 
Office of Advanced Scientific Computing Research, Scientific Discovery through Advanced Computing (SciDAC) program through the FASTMath Institute under Contract DE-AC02-06CH11357 at Argonne 
National Laboratory.  This work was also supported by the U.S. Department of Energy 
through grant DE-FG02-05ER25694.

\bibliography{references}

\end{document}